 \newtheorem{theo}{Theorem}[section]
 \newtheorem{definition}[theo]{Definition}
 \newtheorem{example}[theo]{Example}
  \newtheorem{lem}[theo]{Lemma}
\newtheorem{propos}[theo]{Proposition}
\newcommand{\R}{{\mathbb{R}}}
\newcommand{\N}{{\mathbb{N}}}
\newcommand{\Z}{{\mathbb{Z}}}
\newcommand{\loglike}[1]{\mathop{\rm #1}\nolimits}
\newcommand{\spn}{\loglike{Lin}}
\newcommand{\cspn}{\overline{\loglike{Lin}}}
\newcommand{\vertiii}[1]{{\left\vert\kern-0.25ex\left\vert\kern-0.25ex\left\vert #1
    \right\vert\kern-0.25ex\right\vert\kern-0.25ex\right\vert}}
    \newcommand{\vertiiis}[1]{{\vert\kern-0.25ex\vert\kern-0.25ex\vert #1
    \vert\kern-0.25ex\vert\kern-0.25ex\vert}}
\newcommand{\scal}[1]{{\left\langle\kern-0.25ex\left\langle #1
    \right\rangle\kern-0.25ex\right\rangle}}
\newcommand{\bea}{\begin{eqnarray*}}
\newcommand{\eea}{\end{eqnarray*}}
\newcommand{\beq}{\begin{eqnarray}}
\newcommand{\eeq}{\end{eqnarray}}
 \renewcommand{\le}{\leqslant}
\renewcommand{\leq}{\leqslant}
\renewcommand{\ge}{\geqslant}
\renewcommand{\geq}{\geqslant}
\numberwithin{equation}{section}
\begin{document}
\title[Plasticity of ellipsoids in Hilbert spaces]{Linear expand-contract plasticity of ellipsoids in separable Hilbert spaces}

\author{Olesia Zavarzina}
\address{School of Mathematics and Informatics, V.N. Karazin Kharkiv National University, 61022 Kharkiv, Ukraine
\newline \href{http://orcid.org/0000-0002-5731-6343}{ORCID: \texttt{0000-0002-5731-6343}}}
\email{olesia.zavarzina@yahoo.com}

\subjclass[2010]{46B20, 54E15}

\keywords{non-expansive map; ellipsoid; linearly expand-contract plastic space}
\thanks{ The research is done in frames of Ukrainian Ministry of Science and Education Research Program 0118U002036, and is partially supported by a grant from Akhiezer's Fund, 2018. }

\begin{abstract}
 The paper is aimed to establish the interdependence between linear expand-contract plasticity of an ellipsoid in a separable Hilbert spaces and properties of the set of its semi-axes.
\end{abstract}

\maketitle

\section{Introduction}
Let $M$ be a metric space and $F\colon M\to M$ be a map. $F$ is called \emph{non-expansive} if it does not increase distance between points of the space $M$. M is called \emph{expand-contract plastic} (EC-plastic for short) if every non-expansive bijection $F\colon M \to M$ is an isometry. This property of $M$ can be reformulated in the following way: for every  bijection $F \colon M\to M$, if there are points $x_1, x_2 \in M$ such that $\rho(F(x_1), F(x_2)) < \rho(x_1, x_2)$, then there are other points $y_1, y_2 \in M$ such that $\rho(F(y_1), F(y_2)) > \rho(y_1, y_2)$. It is well known that every compact metric space is EC-plastic, moreover every precompact space (in particular every bounded metric subset of a finite-dimensional normed space, equipped with the induced metric) has the same property (\cite[Satz IV]{FH1936}, see also  \cite[Theorem 1.1]{NaiPioWing}). Using bases of uniformities one can define non-expansive maps in uniform spaces and extend the above result to compact or totally bounded uniform spaces \cite[Theorem 2.3 and Corollary 2.5]{AnKaZa}.

For solid bounded subsets of infinite-dimensional normed space the situation with EC-plasticity is more complicated, because some of them do have this property and some do not, and the classification is not known even for solid bounded closed convex subsets. Even more, the intriguing question whether the unit ball of every Banach space is EC-plastic remains unsolved in spite of a number of recent papers \cite{CKOW2016}, \cite{KZ},  \cite{KZ2017}, \cite{AnKaZa} devoted to this problem.

This paper is motivated by two results by Cascales,  Kadets, Orihuela and Wingler  from \cite{CKOW2016}. Theorem 2.6 states that the unit ball of every strictly convex Banach space is EC-plastic. This implies in particular the EC-plasticity of the unit ball of Hilbert space, which is the simplest possible infinite-dimensional ellipsoid. On the other hand, there is a little bit more complicated ellipsoid which is not EC-plastic:
\begin{example}[{\cite[Example 2.7]{CKOW2016}}]
Consider $H = \ell_2(\mathbb Z)$ and
$$
A =\left\{x = (x_n) \in H: \sum_{k=-\infty}^0 |x_k|^2 + \sum_{k=1}^\infty \left|2 x_k\right|^2 \le 1 \right\}.
$$
Define the linear weighted shift operator $T \colon H \to H$ as follows: $Te_n = e_{n+1}$ for $n \neq 0$ and $Te_0 = \frac{1}{2}e_1$.
This operator maps $A$ to $A$ bijectively, is non-expansive and is not an isometry.
\end{example}
Remark that ellipsoids in finite-dimensional spaces are plastic due to compactness argument. Observe also that operator $T$ in the previous example is linear, so in infinite-dimensional spaces even linear non-expansive bijection of an ellipsoid may not be an isometry. That is why it is reasonable to introduce the following definition:
\begin{definition}
 \emph{Let $M$ be a subset of a normed space $X$. We say that $M$ is} linearly expand-contract plastic \emph{(briefly an LEC-plastic) if every linear operator $T: X \to X$ whose restriction on $M$ is a non-expasive bijection from $M$ onto $M$ is an isometry on $M$}.
 \end{definition}

The aim of this short note is to find the necessary and sufficient conditions on semi-axes of a solid bounded ellipsoid in a Hilbert space for linear plasticity of that  ellipsoid.

\section {Main result}

Below we follow the notations from \cite{Kad}. The letter $H$ denotes a fixed separable infinite-dimensional Hilbert space (real or complex), the symbol $\left\langle x, y \right\rangle$ stays for the scalar product of elements $x, y \in H$ and $\{e_i\}_{i\in \N}$ denotes a fixed orthonormal basis in $H$. Any $x \in H$ admits representation $x = \sum_{n\in \N}x_n e_n$ where $x_n$ are the corresponding Fourier coefficients. Symbol $\spn$ we use to denote the linear span, and $\cspn$ stays for the closed linear span. Ellipsoids we are going to consider generalize straightforwardly finite-dimensional ones. Namely, an ellipsoid in $H$ is a set of the form
 $$
  E =\left\{x = \sum_{n \in \N}x_n e_n  \in H: \sum_{n \in \N}\left|\frac{x_n}{a(n)}\right|^2 \le 1 \right\},
 $$
and the positive numbers $a(n) > 0$ are called \emph{semiaxes} of $E$.  Also in what follows we suppose that the corresponding function $a \colon \N \to \R^+$ is bounded above and below, that is $\inf_n a(n) > 0$ and $\sup_n a(n) < +\infty$. Denote $A = a(\N)$ the set of semi-axes of $E$. Some of semiaxes may have the same length, so we need a bit more terminology. For every $t \in A$ we call its \emph{mutiplicity} the number of elements in the set $a^{-1}(t)$ (which may be finite or infinite) and denote $H_t = \spn\{e_k\}_{k \in a^{-1}(t)}$. We will also use the notation
$$
S = \left\{x = \sum_{n \in \N}x_n e_n  \in H: \sum_{n \in \N}\left|\frac{x_n}{a(n)}\right|^2 = 1 \right\}
$$
for the boundary of $E$.

 Let us begin with not LEC-plastic ellipsoids. One may see that the example from the introduction admits generalization.
\begin{propos}\label{main_ex}
Let the set $A$ of semiaxes of the ellipsoid $E$ contain a subset $B$ possessing the following properties:
\begin{enumerate}
\item $B$ has at least two elements;
\item either $B$ doesn't have minimum or multiplicity of the minimum is infinite;
\item  either $B$ doesn't have maximum or multiplicity of the maximum is infinite.
\end{enumerate}
Then $E$ is not LEC-plastic.
\end{propos}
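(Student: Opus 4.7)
\emph{Proof plan.}
The plan is to generalize the weighted shift from the example in the introduction. I aim to construct a linear operator $T\colon H\to H$ whose restriction to $E$ is a non-expansive bijection that is not an isometry. Since $E$ is symmetric with non-empty interior in $H$, the non-expansiveness of $T|_E$ in the induced metric is equivalent to $\|T\|\le 1$ as a Hilbert space operator; thus it suffices to produce a Hilbert-norm contraction $T$ with $T(E)=E$ that is not an isometry on $H$.

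The heart of the construction is the choice, using the hypotheses on $B$, of a bi-infinite sequence $(f_n)_{n\in\Z}$ of coordinate basis vectors (each $f_n$ equal to one of the $e_k$) such that $a(f_n)\in B$ for every $n$, the sequence $(a(f_n))_{n\in\Z}$ is non-increasing, and it takes at least two distinct values. This is arranged by case analysis on the hypotheses. If $\min B$ is attained with infinite multiplicity, let $f_n$ for $n\ge 1$ run through infinitely many distinct basis vectors inside $H_{\min B}$; otherwise condition~(2) provides a strictly decreasing sequence in $B$ converging to $\inf B$ and we take one basis vector per term. The tail $n\le 0$ is built symmetrically from condition~(3), and condition~(1) ensures that the resulting sequence $a(f_n)$ is non-constant.

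Once $(f_n)$ is fixed, set $Sf_n:=c_n f_{n+1}$ on $H':=\cspn\{f_n\}_{n\in\Z}$ with $c_n:=a(f_{n+1})/a(f_n)\in(0,1]$, and extend by the identity on the orthogonal complement $(H')^{\perp}$ to obtain $T$ on $H$. Because $H'$ is a coordinate subspace, the ellipsoid $E$ splits orthogonally with respect to $H'\oplus(H')^{\perp}$, and a short telescoping calculation confirms $\sum_k |(Sx)_k/a(f_k)|^2=\sum_k |x_k/a(f_k)|^2$ for every $x\in H'$. Consequently $T$ preserves the Minkowski functional of $E$, so $T(E)=E$. The weights $c_n$ are bounded below by $\inf_k a(k)/\sup_k a(k)>0$, so $T$ is invertible on $H$, and hence a linear bijection of $E$. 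Since $\{Tf_n\}$ is an orthogonal set with $\|Tf_n\|=c_n\le 1$ and $T$ is the identity on $(H')^{\perp}$, we have $\|T\|\le 1$; by condition~(1) at least one $c_n$ is strictly less than $1$, so $T$ is not an isometry on $H$, and a fortiori not on $E$.

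The main obstacle is the case analysis producing $(f_n)$: each of the four combinations (minimum of $B$ present with infinite multiplicity or absent, and similarly for the maximum) has to yield the desired bi-infinite indexing with non-increasing semi-axes taking at least two values. After this combinatorial step the verifications reduce to the same telescoping computation as in the example from the introduction.
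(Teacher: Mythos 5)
Your construction is essentially the paper's own proof: a bi-infinite weighted shift $f_n\mapsto \frac{a(f_{n+1})}{a(f_n)}f_{n+1}$ along a chosen sequence of basis vectors with non-increasing semi-axes drawn from $B$ (extended by the identity elsewhere), which preserves the Minkowski functional of $E$, has norm at most $1$, and fails to be an isometry because some weight is strictly less than $1$. The only detail worth making explicit in the case analysis is the gluing of the two tails (the paper enforces $a(n_k)<\tfrac12(\inf B+\sup B)$ on the decreasing tail so that the concatenated sequence is genuinely non-increasing), but this is routine and your plan already flags it.
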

\begin{proof}
Denote $r = \inf B$, $R = \sup B$; according to (1) $r < R$. The property (2) ensures the existence of distinct $n_k \in \N$, $k = 1, 2, \ldots$ such that $a(n_k) \in B$, $a(n_k) < \frac12 (r + R)$ and
$$
a(n_1) \ge a(n_2) \ge a(n_3) \ge \ldots, \quad \lim_{k \to \infty} a(n_k)  = r.
$$
Analogously, the property (3) gives us the existence of distinct $n_k \in \N$, $k = 0, -1, -2, \ldots$ such that $a(n_k) \in B$ and
$$
a(n_1) < a(n_0) \le a(n_{-1}) \le a(n_{-2}) \le \ldots, \quad \lim_{k \to - \infty} a(n_k)  = R.
$$
Define the linear operator $T$ as follows: $Te_n = e_n$ for $n  \in \N \setminus \{n_k\}_{k \in \Z}$, and $Te_{n_k} = \frac{a(n_{k+1})}{a(n_k)}e_{n_{k+1}}$, for $k \in \Z$. Linear non-expansive operator $T$ maps $E$ onto itself bijectively but not isometrically.
\end{proof}

Further we are going to consider LEC-plastic ellipsoids in $H$ and prove that the negation of the conditions in the Proposition above are not only necessary, but also sufficient for linear expand-contract plasticity. In our exposition we will use the following lemmas.

\begin{lem}\label{lem_ind_0}
Let $T\colon H \to H$ be a linear operator which maps $E$ bijectively onto itself. Then $T$ maps the whole $H$ bijectively onto itself and  $T(S) = S$. If, moreover, $T$ is non-expansive on $E$, then $\|T\| \le 1$.
\end{lem}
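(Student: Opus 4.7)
The plan is to handle the three assertions in turn, exploiting throughout the hypotheses $\inf_n a(n) > 0$ and $\sup_n a(n) < \infty$, which make $E$ coincide with the closed unit ball of the equivalent Hilbertian norm $\|x\|_E := \bigl(\sum_n |x_n/a(n)|^2\bigr)^{1/2}$. In particular $E$ is symmetric, absorbing, and contains a $\|\cdot\|$-neighbourhood of $0$, while being $\|\cdot\|$-bounded; these are the only structural facts I will need.

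For bijectivity of $T$ on the whole of $H$, I would use the absorbing property of $E$. To get surjectivity, given $z \in H$, pick $\lam > 0$ with $\lam z \in E$ and use $T(E) = E$ to produce $v \in E$ with $Tv = \lam z$; then $T(v/\lam) = z$. For injectivity, if $Tx = 0$, scaling $x$ into $E$ yields $\lam x \in E$ with $T(\lam x) = 0 = T(0)$, so injectivity of $T|_E$ (together with $0 \in E$) forces $\lam x = 0$ and hence $x = 0$.

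For $T(S) = S$ I would promote $T$ to a topological automorphism of $H$. The inclusions $(\inf_n a(n))\overline{B_H} \subset E \subset (\sup_n a(n))\overline{B_H}$ combined with $T(E) \subset E$ immediately give the norm bound $\|T\| \leq (\sup_n a(n))/(\inf_n a(n))$, so $T$ is continuous; since $T^{-1}$ also maps $E$ bijectively onto $E$, the same estimate yields continuity of $T^{-1}$. Hence $T$ is a linear homeomorphism of $H$ carrying $E$ onto $E$, and therefore sends $S = \partial E$ onto $\partial T(E) = \partial E = S$.

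Finally, to obtain $\|T\| \le 1$ under non-expansiveness on $E$, I would argue by symmetry: for any $z \in H \setminus \{0\}$ set $w = z/\|z\|_E$, so that both $w$ and $-w$ lie in $E$. Non-expansiveness applied to the pair $(w,-w)$ gives
\[
2\|Tw\| \;=\; \|Tw - T(-w)\| \;\leq\; \|w - (-w)\| \;=\; 2\|w\|,
\]
and rescaling by $\|z\|_E$ via linearity yields $\|Tz\| \le \|z\|$. Each step is essentially routine; the only point that warrants some care is the continuity of both $T$ and $T^{-1}$, since without it the transition from $T(E)=E$ to $T(\partial E)=\partial E$ would not be automatic.
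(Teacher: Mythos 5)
Your proof is correct, and the first and third assertions are handled essentially as in the paper (the paper proves $\|T\|\le 1$ by comparing $T(tx)$ with $T(0)$ rather than with $T(-tx)$, a cosmetic difference). Where you genuinely diverge is the step $T(S)=S$: you first establish that $T$ and $T^{-1}$ are bounded via the inclusions $(\inf_n a(n))\overline{B_H} \subset E \subset (\sup_n a(n))\overline{B_H}$, upgrade $T$ to a linear homeomorphism, and then invoke $T(\partial E)=\partial(T(E))$. The paper instead avoids all topology by the purely set-theoretic identity $S = E \setminus \bigcup_{t\in(0,1)}tE$, from which $T(S)=S$ follows using only injectivity and the equalities $T(tE)=tE$. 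Your route is perfectly sound here because the semiaxes are assumed bounded above and below; the paper's argument has the mild advantage of not using these bounds at all for this step, which matters for the Remark at the end of the paper, where the same statements are claimed for ellipsoids with arbitrary semiaxes (there $E$ need not contain a norm-neighbourhood of $0$ in $H$ and your norm estimates for $T$ and $T^{-1}$ would no longer be available in the same form). Within the stated hypotheses, both arguments are complete.
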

\begin{proof}
At first, $E$ is an absorbing set, so $H = \cup_{t > 0}tE$. By linearity $T$ is injective on every set $tE$, consequently it is injective on the whole $H$. Also, $T(H) = \cup_{t > 0}T(tE) = \cup_{t > 0}tE = H$ which gives the surjectivity on $H$. Finally, $S = E \setminus \cup_{t \in (0, 1)}tE$, so $T(S) = T(E) \setminus \cup_{t \in (0, 1)}T(tE) = E \setminus \cup_{t \in (0, 1)}tE = S$.  If, moreover, $T$ is non-expansive on $E$, then for every $x \in H$ there is a $t > 0$ such that $tx \in E$ and we have $\|T(tx)\| = \rho(T(0), T(tx)) \le \rho(0, tx) = \|tx\|$. It remains to divide by $t$ and obtain that $\|Tx\| \le \|x\|$ for all  $x \in H$.
\end{proof}

\begin{lem}\label{lem_ind}
Let the set $A$ of semiaxes of the ellipsoid $E$ contain the minimal element $r$  and let $r$ have finite multiplicity. Let $T\colon H \to H$ be a linear operator  which maps $E$ bijectively onto itself and whose restriction on $H$ is non-expansive. Then $T(H_r) = H_r$, $T\left(H_r \cap E \right) = H_r \cap E$ and the restriction of $T$ onto $H_r$ is a bijective isometry.
\end{lem}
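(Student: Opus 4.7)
The plan is to exploit the fact that the points of $S$ closest to the origin lie precisely in $H_r$, and then use non-expansiveness together with $T(S)=S$ to pin down the action of $T$ on $H_r$.

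First I would observe that for any $x=\sum_n x_n e_n \in S$, the bound $a(n)\ge r$ gives
\[
1 = \sum_{n}\left|\frac{x_n}{a(n)}\right|^2 \le \frac{1}{r^2}\sum_n |x_n|^2 = \frac{\|x\|^2}{r^2},
\]
so $\|x\|\ge r$, with equality if and only if $x_n=0$ whenever $a(n)>r$, i.e.\ if and only if $x\in H_r$. Thus $\{x\in S: \|x\|=r\}=S\cap H_r$, and this is the metric sphere of radius $r$ in the finite-dimensional subspace $H_r$ (finite-dimensionality is where the assumption that $r$ has finite multiplicity enters).

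Next I would apply Lemma \ref{lem_ind_0}: $T(S)=S$ and $\|T\|\le 1$ on all of $H$. Taking any $x\in S\cap H_r$ gives $Tx\in S$ with $\|Tx\|\le\|x\|=r$, but the previous paragraph forces $\|Tx\|\ge r$, so equality holds and $Tx\in S\cap H_r$. Therefore $T(S\cap H_r)\subseteq S\cap H_r$. By linearity (every element of $H_r$ is a scalar multiple of a vector of norm $r$ in $H_r$) this yields $T(H_r)\subseteq H_r$, and the equality $\|Tx\|=\|x\|$ on the sphere $S\cap H_r$ propagates by homogeneity to $\|Tx\|=\|x\|$ for every $x\in H_r$. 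Hence $T|_{H_r}$ is a linear isometry of $H_r$ into itself; since $\dim H_r<\infty$, it is automatically a bijective isometry of $H_r$ onto $H_r$.

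Finally, I would note that $H_r\cap E = \{x\in H_r: \|x\|\le r\}$ is just the closed ball of radius $r$ in $H_r$ (the constraint $\sum|x_n/a(n)|^2\le 1$ collapses to $\|x\|\le r$ on $H_r$), and any surjective linear isometry of $H_r$ maps this ball onto itself, giving $T(H_r\cap E)=H_r\cap E$. The only conceptual step, and the one worth highlighting, is the identification of $S\cap H_r$ as the set of minimum-norm points of $S$; once that is in hand the argument is a short chain of inequalities plus finite-dimensional linear algebra.
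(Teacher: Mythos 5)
Your proof is correct and follows essentially the same route as the paper: identify $S\cap H_r$ as the set of minimum-norm points of $S$, use non-expansiveness together with $T(S)=S$ from Lemma~\ref{lem_ind_0} to get $T(H_r)\subseteq H_r$, and invoke finite-dimensionality for bijectivity. You merely reorder the final steps (deriving the isometry directly from norm equality on the sphere rather than from $T$ mapping the ball of $H_r$ onto itself) and make explicit the equality-case analysis that the paper leaves implicit.
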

\begin{proof}

Since $T$ is non-expansive, all the elements of $S$ of minimal  norm $r$ may be mapped only to those elements of $S$ whose norm is equal to $r$. In other words, $T(H_r) \subset H_r$. For the finite-dimensional  linear space $H_r$ the injectivity of the linear map $T|_{H_r} \colon H_r \to H_r$ implies bijectivity, so $T(H_r) = H_r$ and $T\left(H_r \cap E \right) = H_r \cap E$.
Remark, that $H_r \cap E$ is equal to the closed ball of radius $r$ centered at 0 of the subspace  $H_r$. By linearity this means that $T|_{H_r}$ maps bijectively the unit ball onto the unit ball, so $T|_{H_r}$ is a bijective isometry.
\end{proof}

\begin{lem}\label{lem_ind2}
 Let the set $A$ of semiaxes of the ellipsoid $E$ contain the maximal element $R$  and let $R$ have finite multiplicity. Let $T\colon H \to H$ be a linear operator  which maps $E$ bijectively onto itself and whose restriction on $H$ is non-expansive. Then $T(H_R) = H_R$, $T\left(H_R \cap E \right) = H_R \cap E$ and the restriction of $T$ onto $H_R$ is a bijective isometry.
\end{lem}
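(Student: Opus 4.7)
The plan is to mirror the argument used for Lemma~\ref{lem_ind}, but to work with $T^{-1}$ rather than $T$, because non-expansiveness shrinks norms, so it controls the behavior on the \emph{outer} sphere only via the inverse. By Lemma~\ref{lem_ind_0}, $T$ is a linear bijection of $H$ with $\|T\| \le 1$, and $T(S) = S$; the same lemma applied to $T^{-1}$ (which also maps $E$ bijectively onto itself) yields $T^{-1}(S) = S$. Moreover, $\|Tz\| \le \|z\|$ for every $z \in H$, so substituting $z = T^{-1}y$ gives the dual inequality $\|T^{-1}y\| \ge \|y\|$ for all $y \in H$.

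The next step is a one-line computation showing that among points of $S$, the ones of maximal Euclidean norm are exactly the points of $H_R \cap S$. Indeed, if $y\in S$, then
$$
1 \;=\; \sum_n \frac{|y_n|^2}{a(n)^2} \;\ge\; \frac{1}{R^2}\sum_n |y_n|^2 \;=\; \frac{\|y\|^2}{R^2},
$$
with equality iff $y_n = 0$ whenever $a(n) < R$, that is, iff $y \in H_R$. Hence $\max_{y\in S}\|y\| = R$ and the maximum is attained exactly on $H_R\cap S$.

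Now I combine the two observations. For any $x \in H_R\cap S$ one has $\|x\|=R$, and $T^{-1}x\in S$ with $\|T^{-1}x\|\ge \|x\|=R$. By the above, $\|T^{-1}x\|=R$, so $T^{-1}x \in H_R\cap S$. Thus $T^{-1}(H_R\cap S)\subset H_R$, and by homogeneity $T^{-1}(H_R)\subset H_R$. Since $R$ has finite multiplicity, $H_R$ is finite-dimensional, so the injective linear self-map $T^{-1}|_{H_R}$ is in fact bijective; equivalently $T(H_R)=H_R$. The relations $T(E)=E$ and $T(H_R)=H_R$ give $T(H_R\cap E)=H_R\cap E$, and since $H_R\cap E$ is the closed ball of radius $R$ in the finite-dimensional Hilbert space $H_R$, the fact that the norm is determined by its unit ball forces $T|_{H_R}$ to be a bijective isometry.

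The only step requiring care — and the one that looked like it could be an obstacle at first — is the ``upper'' counterpart of the argument in Lemma~\ref{lem_ind}: non-expansiveness of $T$ gives no information on the maximum-norm level set of $S$ directly, so one has to detour through $T^{-1}$, using that $T$ non-expansive on all of $H$ (Lemma~\ref{lem_ind_0}) is equivalent to $T^{-1}$ being expansive. Once this dualization is in place, the finite-dimensional finish is identical to that of Lemma~\ref{lem_ind}.
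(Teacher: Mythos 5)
Your proof is correct and follows essentially the same route as the paper: the paper likewise passes to $T^{-1}$, noting that preimages of the maximal-norm points of $S$ must again have norm $R$, deduces $T^{-1}(H_R)\subset H_R$, and then uses finite-dimensionality and the unit-ball argument exactly as you do. Your write-up merely makes explicit the dual inequality $\|T^{-1}y\|\ge\|y\|$ and the equality-case computation that the paper leaves implicit.
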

\begin{proof}
The statement is similar to the previous one, and the proof will be similar as well. Since $T$ is non-expansive, the preimages of all the elements of $S$ of maximal  norm $R$ may be only those elements of $S$ whose norm is equal to $R$. In other words, $T^{-1}(H_R) \subset H_R$. For the finite-dimensional  linear space $H_R$ the injectivity of the linear map $T^{-1}|_{H_R} \colon H_R \to H_R$ implies bijectivity, so $T^{-1}(H_R) = H_R$ and $T(H_R) = H_R$. The rest of the proof is the same as in the previous Lemma.
\end{proof}

Now we are ready for the main theorem of the paper.

\begin{theo}
The ellipsoid $E$ is LEC-plastic iff every subset $B$ of the set $A$ of semi-axes of $E$ that consists of more than one element possesses at least one of the following properties:
\begin{enumerate}
\item $B$ has a maximum of finite multiplicity;
\item  $B$ has a minimum of finite multiplicity.
\end{enumerate}
\end{theo}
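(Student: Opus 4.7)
The necessity is immediate from Proposition~\ref{main_ex}: a subset $B\subseteq A$ of more than one element having neither a maximum of finite multiplicity nor a minimum of finite multiplicity satisfies exactly the hypotheses (1)--(3) of Proposition~\ref{main_ex}, so $E$ is not LEC-plastic. For sufficiency, I suppose the condition on $A$ holds and let $T$ be a linear non-expansive bijection of $E$ onto itself. Lemma~\ref{lem_ind_0} yields that $T$ is a bijection of $H$ with $\|T\|\le 1$, and since $E$ is balanced and absorbing while $T$ is linear, showing $T$ is an isometry on $E$ is equivalent to showing $\|Tx\|=\|x\|$ for all $x\in H$.

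The key additional ingredient I would establish is the following orthogonal-complement principle: \emph{if $V\subseteq H$ is a closed subspace such that $T|_V$ is a bijective isometry of $V$ onto itself, then $T(V^\perp)=V^\perp$ and $\|T|_{V^\perp}\|\le 1$}. Indeed, $\|T\|\le 1$ forces $I-T^*T\ge 0$; the isometry condition gives $\langle (I-T^*T)v,v\rangle=0$ for $v\in V$, and positivity then upgrades this to $T^*Tv=v$ on $V$. Consequently, for $w\in V^\perp$ and $v\in V$, $\langle Tw,Tv\rangle=\langle w,T^*Tv\rangle=\langle w,v\rangle=0$, i.e. $Tw\perp T(V)=V$, so $T(V^\perp)\subseteq V^\perp$; bijectivity of $T$ on $H$ together with $T(V)=V$ upgrades this inclusion to equality.

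With this principle in hand, I alternate it with Lemmas~\ref{lem_ind} and~\ref{lem_ind2}. Set $H_0=H$, $E_0=E$, $A_0=A$. At each successor stage $\alpha+1$ with $|A_\alpha|\ge 2$, the hereditary hypothesis provides a minimum or maximum of $A_\alpha$ of finite multiplicity; applying Lemma~\ref{lem_ind} or Lemma~\ref{lem_ind2} inside the ambient Hilbert space $H_\alpha$ produces a finite-dimensional $T$-invariant $V_\alpha\subset H_\alpha$ on which $T$ is a bijective isometry, after which the principle makes $H_{\alpha+1}:=V_\alpha^\perp\cap H_\alpha$ a $T$-invariant subspace on which $T|_{H_{\alpha+1}}$ non-expansively bijects the sub-ellipsoid $E_{\alpha+1}:=E\cap H_{\alpha+1}$ (with semi-axis set $A_{\alpha+1}=A_\alpha\setminus\{t_\alpha\}$). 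At a limit ordinal $\lambda$, I take $H_\lambda=\bigcap_{\beta<\lambda}H_\beta=\cspn\{e_n:a(n)\in A_\lambda\}$ with $A_\lambda=A\setminus\{t_\beta:\beta<\lambda\}$. Since $A$ is countable, the strictly decreasing chain $A_\alpha$ terminates at some countable ordinal $\xi$ with $|A_\xi|\le 1$: either $H_\xi=\{0\}$, or $E_\xi$ is a ball in $H_\xi$ and the non-expansive linear bijection $T|_{H_\xi}$ is automatically an isometry (its inverse is then also non-expansive on the ball). The subspaces $V_\alpha$ and $H_\xi$ are pairwise orthogonal with closed linear span $H$ and $T$ is isometric on each, so $T$ is isometric on $H$, finishing the proof.

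The main difficulty I anticipate is the bookkeeping at the limit stages of this transfinite recursion --- checking that $H_\lambda$ retains the ellipsoid structure with exactly the semi-axis set $A_\lambda$ and that $T$ continues to biject $E_\lambda$ non-expansively --- since all the essential geometric content is already packaged in Lemmas~\ref{lem_ind}, \ref{lem_ind2}, and the orthogonal-complement principle above.
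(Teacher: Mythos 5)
Your proposal is correct, and its skeleton --- peel off extremal semi-axes of finite multiplicity via Lemmas \ref{lem_ind} and \ref{lem_ind2}, pass to orthogonal complements, and iterate transfinitely --- matches the paper's. Two of your technical choices, however, genuinely diverge from the published argument, and both are sound. First, the paper organizes the induction by proving beforehand (its ``Claim 1'') that $A$ splits at some $\tau$ into $A^+=A\cap(\tau,+\infty)$ well-ordered by $\geq$, $A^-=A\cap(0,\tau)$ well-ordered by $\leq$, and at most one semi-axis of infinite multiplicity; it then runs two separate transfinite inductions along these well-ordered index sets and treats the round ball $H_\tau\cap E$ last. You instead run a single recursion removing at each stage whichever extremum of finite multiplicity the hereditary hypothesis supplies; this spares you Claim 1 at the price of the termination and limit-stage bookkeeping you flag, which does go through (the $t_\alpha$ are distinct elements of the countable set $A$, so the recursion halts at a countable ordinal, and at limits $T(H_\lambda)=H_\lambda$ because $T(H_\beta)=H_\beta=T^{-1}(H_\beta)$ for all $\beta<\lambda$). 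Second, and more substantively, the paper obtains the invariance of orthogonal complements by introducing the modified scalar product $\scal{x,y}=\sum_n x_n\overline{y_n}/a(n)^2$, in which $E$ is the unit ball, and invoking the theorem that a bijective linear isometry of a Hilbert space is unitary; your orthogonal-complement principle ($\|T\|\le 1$ gives $I-T^*T\ge 0$, the isometry on $V$ annihilates the associated quadratic form there, positivity upgrades this to $T^*T|_V=I$, hence $T(V^\perp)\subseteq V^\perp$, with equality by bijectivity) reaches the same conclusion entirely within the original inner product and is self-contained. The paper's route buys a structural description of admissible semi-axis sets plus a one-line appeal to a standard unitarity theorem; yours buys a leaner, elementary operator-theoretic step at the cost of a slightly more delicate recursion scheme.
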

 \begin{proof}
  We have already demonstrated in Proposition \ref{main_ex} the ``only if'' part of the theorem. It remains for us to prove the ``if'' part.

Let us first note that $A$ cannot contain more than one element of infinite multiplicity. Indeed, if $b_1, b_2 \in A$ are two different elements of infinite multiplicity, then $B = \{b_1, b_2\} \subset A$ satisfies neither condition (1) nor condition (2) of our theorem.

\textbf{Claim 1}. \emph{There is a $\tau > 0$ such that $A^+ = A \cap (\tau, +\infty)$  is well-ordered with respect to the ordering $\geq$ (that is every not empty subset of $A^+$ has a maximal element), $A^- = A \cap (0, \tau)$  is well-ordered with respect to the ordering $\leq$ (that is every not empty subset of $A^-$ has a minimal element), and neither $A^+$ nor $A^-$ contain elements of infinite multiplicity}.

Indeed, if there is an element $a_\infty \in A$ of infinite multiplicity, let us take $\tau = a_\infty$. Let us demonstrate that $(A^+,\geq)$ is well-ordered. If  $A^+ = \emptyset$ the statement is clear. In the other case for every not empty subset $D$ of $A^+$ consider $B = \{\tau\} \cup D$. Then  the minimal element of $B$ is $\tau$, which has infinite multiplicity so $B$ must have a maximum of finite multiplicity. This maximum will be also the maximal element of $D$. The demonstration of well ordering for $(A^-,\leq)$ works the same way.

Now, consider the remaining case of $A$ consisting only of finite multiplicity elements. Consider the set $U$ of all those $t \in (0, +\infty)$ that $A \cap (t, +\infty)$ is not empty and well-ordered with respect to the ordering $\geq$. If $U$ is not empty, take $\tau = \inf U$, if $U = \emptyset$, take $\tau = \sup A$. Let us demonstrate that this $\tau$ is what we need. In the first case $A^+ = A \cap (\tau, +\infty)$ and for every $t > \tau$ we have $A \cap (t, +\infty)$ is not empty and well-ordered with respect to the ordering $\geq$. This implies that $(A^+,\geq)$ is well-ordered. In the second case $A^+ = \emptyset$, which is also well-ordered \smiley.
So, it remains to demonstrate that $A^- = A \cap (0, \tau)$  is well-ordered with respect to the ordering $\leq$. Assume this is not true. Then, there is a not empty subset $B \subset A^-$ with no minimal element. According to the conditions of our theorem $B$ has a maximal element $b$. Since $b < \tau$ and by definition of $\tau$ the set $A \cap (b, +\infty)$ is not well-ordered with respect to the ordering $\geq$. Consequently, there is a not empty $D \subset A \cap (b, +\infty)$ with no maximal element. Then, $B \cup D$ satisfies neither condition (1) nor condition (2) of our theorem. This contradiction completes the demonstration of Claim 1.

Now we introduce in the natural way the following three subspaces:
$$
H^- = \cspn\{e_k\}_{k \in a^{-1}(A^-)}, \,  H_\tau = \cspn\{e_k\}_{k \in a^{-1}(\tau)}, \, H^+ = \cspn\{e_k\}_{k \in a^{-1}(A^+)}.
$$
Evidently, these closed linear subspaces of $H$ are mutually orthogonal and $H = H^- \oplus H_\tau \oplus H^+$ (some of the summands may be trivial). Let $T\colon H \to H$ be a linear operator which maps $E$ bijectively onto itself and whose restriction on $E$ is non-expansive.

\textbf{Claim 2}. \emph{$T(H^-) = H^-$, $T(H^+) = H^+$ and the restrictions of $T$ onto $H^-$ and $H^+$ are bijective isometries}.

We will demonstrate the part of our claim that speaks about $H^+$: the reasoning about $H^-$ will differ only in the usage of Lemma \ref{lem_ind} instead of  Lemma \ref{lem_ind2}.

If $A^+ = \emptyset$ there is noting to do. In the case of $A^+ \neq \emptyset$
we are going to demonstrate by transfinite induction in $t \in (A^+,\geq)$ the validity for all $t \in A^+$ of the following statement $\frak U(t)$: the subspace $H(t) := \cspn\{e_k \colon a(k) \ge t\}$ is $T$-invariant and $T$ maps $H(t)$ onto $H(t)$ isometrically. Since the collection of subspaces $H(t)$, $t \in A^+$ is a chain whose union is dense in $H^+$, the continuity of $T$ will imply the desired Claim 2.

The base of induction is the statement  $\frak U(t)$ for $t = \max A$. This is just the statement of Lemma \ref{lem_ind2}. We assume now as inductive hypothesis the validity of $\frak U(t)$ for all $t > t_0 \in A^+$, and our goal is to prove the statement $\frak U(t_0)$.

For every $x,y$ in $H$ let us introduce a modified scalar product $\scal{x,y}$ as follows:
$$
\scal{x,y}=\sum_{n \in \N}\frac{x_n \overline{y_n}}{a(n)^2}.
$$
Then the norm on $H$ induced by this modified scalar product is
$$
\vertiiis{x}=\left(\sum_{i\in I}\left|\frac{x_i}{a_i}\right|^2\right)^{1/2}.
$$
Ellipsoid $E$ is the unit ball in this new norm and since $T$ is linear and maps $E$ onto $E$ bijectively, $T$ is a bijective isometry of $\left(H,  \vertiiis{\cdot}\right)$ onto itself.

Due to \cite[Theorem 2, p. 353]{Kad} $T$ is a unitary operator in the modified scalar product and thus $T$ preserves the modified scalar product. In particular, it preserves the orthogonality in the modified scalar product.

Denote $X = \bigcup_{t > t_0}H(t) = \spn\{e_k \colon a(k) > t_0\}$. The orthogonal complement to $X$ in the modified scalar product is $X^\perp =  \cspn\{e_k \colon a(k) \le t_0\}$ (occasionally the orthogonal complement to $X$ in the original scalar product is the same).  Our  inductive hypothesis implies that $T(X) = X$, consequently $T(X^\perp) = X^\perp$ and  $T(X^\perp \cap E) = X^\perp \cap E$.

$X^\perp$ equipped with the original scalar product is a Hilbert space, $X^\perp \cap E$ is an ellipsoid in $X^\perp$, $t_0$ is the maximal semiaxis of that ellipsoid  and the multiplicity of  $t_0$ is finite because  $t_0 \in A^+$. The application of Lemma \ref{lem_ind2} gives us that $T(H_{t_0}) = H_{t_0}$  and the restriction of $T$ onto $H_{t_0}$ is a bijective isometry in the original norm. Now, $T$ maps $X$ onto $X$ isometrically, maps $H_{t_0}$  onto $H_{t_0}$ isometrically and $H(t_0)$ is the orthogonal direct sum of subspaces $H_{t_0}$ and the closure of $X$. This implies that $T$ maps $H(t_0)$ onto $H(t_0)$ isometrically, and the inductive step is done. This completes the demonstration of Claim 2.

From Claim 2 and mutual orthogonality of $H^-$ and $H^+$ we deduce that  $T(H^- \oplus H^+) = H^- \oplus H^+$ and $T$ is an isometry on  $H^- \oplus H^+$. Recalling again that $T$ preserves the modified scalar product and the fact that the orthogonal complement to $X$ in the modified scalar product is $H_\tau$ we obtain that $T(H_\tau) = H_\tau$  and consequently $T(H_\tau \cap E) = H_\tau \cap E$. But $H_\tau \cap E$ is equal to the closed ball of radius $\tau$ (in the original norm) centered at 0, so the equality $T(H_\tau \cap E) = H_\tau \cap E$ and linearity of $T$ implies that $T$ is an isometry on $H_\tau$. Finally, as we know, $H = H^- \oplus H_\tau \oplus H^+$, so $T$ is an isometry on the whole $H$.
\end{proof}

\textbf{Remark}.We are primarily interested in classification of closed bounded solid sets. That is why we input the restrictions  $\inf_n a(n) > 0$ and $\sup_n a(n) < +\infty$ on the semiaxes of the ellipsoid. Nevertheless, the question of LEC-plasticity makes sense for ellipsoids with arbitrary semiaxes and the same description remains valid. The only difference is that in that general case $E$ may be not absorbing and the condition on $T$ of being non-expansive on $E$ implies continuity of $T$ on $\spn E$ but does not imply the continuity on the whole $H$. That is why in the general case all the lemmas and claims should deal with the linear span of the ellipsoid $E$ instead of the whole space $H$. In this case the subspace $\spn E$ may be non-closed and the corresponding inner product spaces $(\spn E, \langle \cdot, \cdot \rangle)$ and $(\spn E, \scal{\cdot, \cdot})$ may be incomplete, which leads to some difficulties because in most books orthonormal bases and unitary operators are defined only for Hilbertian, i.e. complete spaces. Fortunately, these difficulties are of purely terminological nature.

\vspace{3mm}
\textbf{Acknowledgement}. The author is grateful to her scientific adviser Vladimir
Kadets for constant help with this project.

\bibliographystyle{amsplain}

\begin{thebibliography}{10}


\bibitem{AnKaZa} Angosto C., Kadets V., Zavarzina O. \textit{Non-expansive bijections, uniformities and polyhedral faces}, arXiv:1808.08534, to appear in J. Math. Anal. Appl.

\bibitem{CKOW2016} Cascales B.,  Kadets V., Orihuela J.,  Wingler E.J. \textit{Plasticity of the unit ball of a strictly convex Banach space}, Revista de la Real Academia de Ciencias Exactas, F\'{\i}sicas y Naturales. Serie A. Matem\'aticas,  {\bf 110(2)}(2016), 723--727.

\bibitem{FH1936} Freudenthal H., Hurewicz W., \textit{Dehnungen, Verk\"urzungen, Isometrien}, Fund. Math. {\bf 26} (1936), 120--122.

\bibitem{KZ} Kadets V.,  Zavarzina O.  \textit{Plasticity of the unit ball of $\ell_1$},  Visn. Hark. nac. univ. im. V.N. Karazina, Ser.: Mat. prikl. mat. meh.,  \textbf{83} (2017), 4--9.

\bibitem{Kad} Kadets V.   \textit{A course in Functional Analysis and Measure Theory}, Springer, 2018.

\bibitem{KZ2017} Kadets V.,  Zavarzina O.  \textit{Non-expansive bijections to the unit ball of $\ell_1$-sum of  strictly convex Banach spaces}, Bulletin of the Australian Mathematical Society, \textbf{97}, Number 2 (2018), 285--292.


\bibitem{NaiPioWing}  Naimpally S. A., Piotrowski Z., Wingler E. J.  \textit{Plasticity in metric spaces}, J. Math. Anal. Appl., \textbf{313} (2006), 38--48.

\bibitem{Zav} Zavarzina O. \textit{Non-expansive bijections between unit balls of Banach spaces},   Annals of Functional Analysis, \textbf{9}, Number 2 (2018), 271--281.

\end{thebibliography}

\end{document}